\definecolor {refcol}{RGB}{40,0,255}
\newfont{\footsc}{cmcsc10 at 8truept}
\newfont{\footbf}{cmbx10 at 8truept}
\newfont{\footrm}{cmr10 at 10truept}
\newtheorem{theorem}{Theorem}
\newenvironment{proof}[1][Proof]{\noindent{\textbf {#1}  }}  {\hfill$\Box$\bigskip}
\begin{document}

\title{\textbf{Max }$k$\textbf{-cut and the smallest eigenvalue}}
\author{V. Nikiforov\thanks{Department of Mathematical Sciences, University of
Memphis, Memphis TN 38152, USA}}
\date{}
\maketitle

\begin{abstract}
Let $G$ be a graph of order $n$ and size $m$, and let $\mathrm{mc}_{k}\left(
G\right)  $ be the maximum size of a $k$-cut of $G.$ It is shown that
\[
\mathrm{mc}_{k}\left(  G\right)  \leq\frac{k-1}{k}\left(  m-\frac{\mu_{\min
}\left(  G\right)  n}{2}\right)  ,
\]
where $\mu_{\min}\left(  G\right)  $ is the smallest eigenvalue of the
adjacency matrix of $G.$

An infinite class of graphs forcing equality in this bound is
constructed.\textit{\medskip}

\textbf{Keywords: }max $k$-cut\textit{; chromatic number; largest eigenvalues;
largest Laplacian eigenvalue; smallest adjacency eigenvalue.\medskip}

\textbf{AMS classification: }05C50

\end{abstract}

\section{Introduction and main results}

The \emph{maximum }$k$\emph{-cut} of $G,$ denoted by $\mathrm{mc}_{k}\left(
G\right)  $, is the maximum number of edges in a $k$-partite subgraph of $G.$
This note provides an upper bound on $\mathrm{mc}_{k}\left(  G\right)  $ based
on $\mu_{\min}\left(  G\right)  $ -- the smallest eigenvalue of the adjacency
matrix of $G$.\ 

In \cite{MoPo90} Mohar and Poljak gave the celebrated bound $\mathrm{mc}%
_{2}\left(  G\right)  \leq$ $\lambda\left(  G\right)  n/4,$ where
$\lambda\left(  G\right)  $ is the maximum eigenvalue of the Laplacian matrix
of $G$. However, one may question how fit $\lambda\left(  G\right)  $ is for
such a bound on $\mathrm{mc}_{k}\left(  G\right)  $, since $\mathrm{mc}%
_{k}\left(  G\right)  $ is a Lipschitz function in the number of edges $m,$
whereas $\lambda\left(  G\right)  $ may be quite volatile in $m/n.$ Indeed,
raising the degree of a single vertex of maximum degree $\Delta\left(
G\right)  $ in $G$ can raise $\lambda\left(  G\right)  $ accordingly, due to
the inequality $\lambda\left(  G\right)  >\Delta\left(  G\right)  .$ In
contrast, $\mu_{\min}\left(  G\right)  $ depends more robustly on $m/n,$ and
hence may be a better choice than $\lambda\left(  G\right)  $ for upper bounds
on $\mathrm{mc}_{2}\left(  G\right)  $. In \cite{Tre12}, Trevisan came to
grips with similar problems, but the emphasis of his work is on algorithms and
no bound was produced in closed form. Thus, we propose the following theorem:

\begin{theorem}
\label{mth}If $G$ is a graph with $n$ vertices and $m$ edges, then
\begin{equation}
\mathrm{mc}_{k}\left(  G\right)  \leq\frac{k-1}{k}\left(  m-\frac{\mu_{\min
}\left(  G\right)  n}{2}\right)  . \label{mcin}%
\end{equation}

\end{theorem}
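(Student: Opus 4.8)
The plan is to use the standard regular-simplex embedding of the $k$ color classes and then bound the resulting quadratic form from below by $\mu_{\min}(G)$. First I would fix a partition $V(G)=V_1\cup\cdots\cup V_k$ realizing an arbitrary $k$-cut, write $C$ for the number of edges joining distinct classes and $S=m-C$ for the number of edges lying inside the classes. The goal then reduces to proving $C\le\frac{k-1}{k}\bigl(m-\mu_{\min}(G)n/2\bigr)$ for every such partition, since the one attaining $\mathrm{mc}_k(G)$ is a special case.

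The key device is a family of $k$ unit vectors $s_1,\dots,s_k\in\mathbb{R}^{k-1}$ forming a regular simplex, so that $\langle s_i,s_i\rangle=1$ and $\langle s_i,s_j\rangle=-1/(k-1)$ for $i\ne j$. Such vectors exist because their Gram matrix $\frac{k}{k-1}I-\frac{1}{k-1}J$ is positive semidefinite of rank $k-1$ (its eigenvalues are $0$ once and $k/(k-1)$ with multiplicity $k-1$). Assign to each $v\in V_i$ the vector $x_v:=s_i$ and collect these as the rows of an $n\times(k-1)$ matrix $X$. For an edge inside a class the inner product $\langle x_u,x_v\rangle$ equals $1$, and for a cut edge it equals $-1/(k-1)$; hence, writing $A$ for the adjacency matrix,
\[
\tfrac{1}{2}\operatorname{tr}(X^{\mathsf T}AX)=\sum_{uv\in E(G)}\langle x_u,x_v\rangle=S-\frac{C}{k-1}.
\]

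Next I would bound $\operatorname{tr}(X^{\mathsf T}AX)$ from below. Splitting $X$ into its columns $x^{(1)},\dots,x^{(k-1)}\in\mathbb{R}^n$ and applying the Rayleigh inequality $y^{\mathsf T}Ay\ge\mu_{\min}(G)\|y\|^2$ to each column gives
\[
\operatorname{tr}(X^{\mathsf T}AX)=\sum_{\alpha=1}^{k-1}(x^{(\alpha)})^{\mathsf T}A\,x^{(\alpha)}\ge\mu_{\min}(G)\sum_{\alpha=1}^{k-1}\|x^{(\alpha)}\|^2=\mu_{\min}(G)\sum_{v}\|x_v\|^2=\mu_{\min}(G)\,n,
\]
the last equality holding because each $x_v$ is a unit vector. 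Combining the two displays yields $S-C/(k-1)\ge\mu_{\min}(G)n/2$; substituting $S=m-C$ turns the left side into $m-C\cdot\frac{k}{k-1}$, and rearranging gives exactly $C\le\frac{k-1}{k}\bigl(m-\mu_{\min}(G)n/2\bigr)$.

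As for difficulty, every step is essentially forced once the simplex embedding is chosen, so there is no deep obstacle; the only points needing a moment's care are verifying that the equiangular unit vectors with pairwise inner product $-1/(k-1)$ genuinely live in $\mathbb{R}^{k-1}$ (the positive semidefiniteness of the Gram matrix above) and keeping the bookkeeping among $S$, $C$, and $m$ consistent. Note that the sign of $\mu_{\min}(G)$ plays no role, since the Rayleigh bound is applied verbatim to every real column vector.
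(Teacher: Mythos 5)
Your proof is correct and takes essentially the same route as the paper: the paper applies Rayleigh's principle for $\mu_{\min}$ to the $k$ vectors $\mathbf{y}^{(i)}$ with entries $-(k-1)$ on $V_i$ and $1$ off $V_i$, which are exactly the coordinates of your simplex embedding scaled by $\sqrt{k(k-1)}$ (realized in $\mathbb{R}^k$ rather than $\mathbb{R}^{k-1}$), and then sums the resulting $k$ inequalities. The only difference is bookkeeping: your identity $\frac{1}{2}\operatorname{tr}(X^{\mathsf T}AX)=S-C/(k-1)$ combined with $\operatorname{tr}(X^{\mathsf T}AX)\ge\mu_{\min}(G)\,n$ is, up to the constant factor $k(k-1)$, the same computation the paper carries out explicitly with the edge counts $e(V_i)$ and $e(V_i,V_j)$.
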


\begin{proof}
Let $G$ be as required and suppose that its vertex set is $\left[  n\right]
:=\left\{  1,\ldots,n\right\}  .$ Let $H$ be a $k$-partite subgraph of $G$
with $\mathrm{mc}_{k}\left(  G\right)  $ edges, and let $\left[  n\right]
=V_{1}\cup\cdots\cup V_{k}$ be the partition of the vertices of $H$ into $k$
edgeless sets. The idea of the proof is to use Rayleigh's principle to
construct $k$ upper bounds on $\mu_{\min}\left(  G\right)  ,$ and then take
their average as an upper bound on $\mu_{\min}\left(  G\right)  $.

For each $i\in\left[  k\right]  ,$ define a vector $\mathbf{y}^{(i)}%
:=(y_{1}^{(i)},\ldots,y_{n}^{(i)})$ as%
\[
y_{j}^{(i)}:=\left\{
\begin{array}
[c]{ll}%
-k+1, & \text{if }j\in V_{i},\medskip\\
1, & \text{if }j\in\left[  n\right]  \backslash V_{i}.
\end{array}
\right.
\]
Write $\left\langle \mathbf{u},\mathbf{v}\right\rangle $ for the inner product
of the vectors $\mathbf{u}$ and $\mathbf{v,}$ and note that for each
$i\in\left[  k\right]  ,$ Rayleigh's principle implies that
\[
\mu_{\min}\left(  G\right)  \left\Vert \mathbf{y}^{(i)}\right\Vert ^{2}%
\leq\left\langle A\mathbf{y}^{(i)},\mathbf{y}^{(i)}\right\rangle .
\]
Hence, summing these inequalities for all $i\in\left[  k\right]  ,$ we get%
\begin{equation}
\mu_{\min}\left(  G\right)  \sum\limits_{i\in\left[  k\right]  }\left\Vert
\mathbf{y}^{(i)}\right\Vert ^{2}\leq\sum\limits_{i\in\left[  k\right]
}\left\langle A\mathbf{y}^{(i)},\mathbf{y}^{(i)}\right\rangle .\label{i1}%
\end{equation}
On the one hand, for $\sum_{i\in\left[  k\right]  }\left\Vert \mathbf{y}%
^{(i)}\right\Vert ^{2}$ we have
\begin{equation}
\sum\limits_{i\in\left[  k\right]  }\left\Vert \mathbf{y}^{(i)}\right\Vert
^{2}=\sum\limits_{i\in\left[  k\right]  }\left(  \left(  k-1\right)
^{2}\left\vert V_{i}\right\vert +n-\left\vert V_{i}\right\vert \right)
=\left(  k^{2}-k\right)  n.\label{i2}%
\end{equation}
On the other hand, writing $e\left(  X\right)  $ for the number of edges
induced by a set $X$ and $e\left(  X,Y\right)  $ for the number of cross-edges
between the sets $X$ and $Y,$ for every $i\in\left[  k\right]  ,$ we see that
\[
\left\langle A\mathbf{y}^{(i)},\mathbf{y}^{(i)}\right\rangle =2\left(
k-1\right)  ^{2}e\left(  V_{i}\right)  +\sum\limits_{j\in\left[  k\right]
\backslash\left\{  i\right\}  }2e\left(  V_{j}\right)  -\sum\limits_{j\in
\left[  k\right]  \backslash\left\{  i\right\}  }2\left(  k-1\right)  e\left(
V_{i},V_{j}\right)  +\sum\limits_{j,l\in\left[  k\right]  \backslash\left\{
i\right\}  ,j\neq l}2e\left(  V_{l},V_{j}\right)  .
\]
Summing these inequalities for all $i\in\left[  k\right]  ,$ we get four terms
in the right side:
\begin{align*}
\sum\limits_{i\in\left[  k\right]  }2\left(  k-1\right)  ^{2}e\left(
V_{i}\right)   &  =2\left(  k-1\right)  ^{2}\left(  m-\mathrm{mc}_{k}\right)
,\\
\sum\limits_{i\in\left[  k\right]  }\sum\limits_{j\in\left[  k\right]
\backslash\left\{  i\right\}  }2e\left(  V_{j}\right)   &  =2\left(
k-1\right)  \left(  m-\mathrm{mc}_{k}\right)  ,\\
-\sum\limits_{i\in\left[  k\right]  }\sum\limits_{j\in\left[  k\right]
\backslash\left\{  i\right\}  }2\left(  k-1\right)  e\left(  V_{i}%
,V_{j}\right)   &  =-4\left(  k-1\right)  \mathrm{mc}_{k},\\
\sum\limits_{i\in\left[  k\right]  }\text{ }\sum\limits_{j,l\in\left[
k\right]  \backslash\left\{  i\right\}  ,j\neq l}2e\left(  V_{l},V_{j}\right)
&  =2\left(  k-2\right)  \mathrm{mc}_{k}.
\end{align*}
Hence, for $\sum_{i\in\left[  k\right]  }\left\langle A\mathbf{y}%
^{(i)},\mathbf{y}^{(i)}\right\rangle $ we obtain
\begin{align*}
\sum\limits_{i\in\left[  k\right]  }\left\langle A\mathbf{y}^{(i)}%
,\mathbf{y}^{(i)}\right\rangle  &  =2\left(  k-1\right)  ^{2}\left(
m-\mathrm{mc}_{k}\right)  +2\left(  k-1\right)  \left(  m-\mathrm{mc}%
_{k}\right)  -4\left(  k-1\right)  \mathrm{mc}_{k}+2\left(  k-2\right)
\mathrm{mc}_{k}\\
&  =2k\left(  k-1\right)  \left(  m-\mathrm{mc}_{k}\right)  -2k\mathrm{mc}%
_{k}=2k\left(  k-1\right)  \left(  m-\frac{k}{k-1}\mathrm{mc}_{k}\right)  .
\end{align*}
Finally, combining the last equality with (\ref{i1}) and (\ref{i2}), we get%
\[
\frac{\mu_{\min}\left(  G\right)  n}{2}\leq m-\frac{k}{k-1}\mathrm{mc}_{k},
\]
completing the proof of (\ref{mcin}).
\end{proof}

Note that the above proof also applies to weighted graphs, i.e., graphs whose
edges have been assigned positive real numbers. For $k=2,$ inequality
(\ref{mcin}) can be obtained from Lemma 1 of Delorme and Poljak \cite{DePo93}
by letting $u=\left[  2m/n-d_{i}\right]  ,$ where $d_{1},\ldots,d_{n}$ are the
degrees of $G$\footnote{Lov\'{a}sz stated this fact in Proposition 6.4.4
\cite{Lov03} without details, and Trevisan missed his point in the footnote on
p. 1772 of \cite{Tre12}.}. Likewise, (\ref{mcin}) can also be obtained by
letting $d=\left[  2m/n-d_{i}\right]  $ in equation (9) of the paper  of van
Dam and Sotirov \cite{DaSo16}.$\medskip$

Let us note that equality may hold in (\ref{mcin}) for numerous graphs, both
regular and irregular. Our next goal is to exhibit an infinite class of such
graphs, for which we need some preparation.\medskip

Suppose that $r\geq k\geq2$ and write $t_{k}\left(  n\right)  $ for the
maximum number of edges in a $k$-partite graph of order $n.$ The numbers
$t_{k}\left(  n\right)  $ are called Tur\'{a}n numbers, and it is known that
\[
t_{k}\left(  n\right)  =\frac{k-1}{2k}\left(  n^{2}-s^{2}\right)  +\binom
{s}{2},
\]
where $s$ is the remainder $n$ $\operatorname{mod}$ $k.$ It is not hard to see
that%
\[
\frac{k-1}{2k}n^{2}-\frac{k}{8}\leq t_{k}\left(  n\right)  \leq\frac{k-1}%
{2k}n^{2}.
\]
Equality on the right holds if and only if $k$ divides $n.$ Equality on the
left holds if and only if $k$ is even and $n=k/2$ $\operatorname{mod}$
$k.\medskip$

The sum of all weights of a weighted graph is called its \emph{total weight,
}and the maximum\emph{ }$k$-cut of a weighted graph is the maximum total
weight of its $k$-partite subgraphs$.$ Next, we give a lower bound on
$\mathrm{mc}_{k}\left(  G\right)  $ that may well be known.

\begin{theorem}
\label{th2}Let $r\geq k\geq2.$ If $G$ is a weighted $r$-partite graph with
total weight $m$, then
\[
\mathrm{mc}_{k}\left(  G\right)  \geq\frac{t_{k}\left(  r\right)  }{\binom
{r}{2}}m.
\]

\end{theorem}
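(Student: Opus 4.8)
The plan is to build a large $k$-cut of $G$ by merging its $r$ colour classes into $k$ groups, and to locate a good merge by an averaging argument over random relabellings of the classes. Write $U_{1},\ldots,U_{r}$ for the parts of $G$, and for $i\neq j$ let $w_{ij}$ denote the total weight of the edges joining $U_{i}$ to $U_{j}$; since $G$ is $r$-partite, all of its weight sits on such cross-edges, so $m=\sum_{i<j}w_{ij}$. Any map $\phi\colon\left[r\right]\rightarrow\left[k\right]$ yields a $k$-partite subgraph of $G$ whose colour classes are the unions $\bigcup_{\phi(i)=\ell}U_{i}$: an edge between $U_{i}$ and $U_{j}$ survives precisely when $\phi(i)\neq\phi(j)$, so the retained weight is $\sum_{\phi(i)\neq\phi(j)}w_{ij}$. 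Thus it suffices to exhibit a single $\phi$ for which this quantity is at least $\frac{t_{k}(r)}{\binom{r}{2}}m$.

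To produce such a $\phi$, first I would fix a partition $\left[r\right]=B_{1}\cup\cdots\cup B_{k}$ into classes of sizes as equal as possible, so that the number of pairs lying in distinct classes equals $t_{k}(r)$; this is just the Tur\'{a}n graph realised on the ground set $\left[r\right]$. Now let $\pi$ be a uniformly random permutation of $\left[r\right]$ and set $\phi_{\pi}(i):=\ell$ whenever $\pi(i)\in B_{\ell}$. By linearity of expectation, the expected retained weight is
\[
\E\Bigl[\sum_{\phi_{\pi}(i)\neq\phi_{\pi}(j)}w_{ij}\Bigr]=\sum_{i<j}w_{ij}\,\Pr\bigl[\phi_{\pi}(i)\neq\phi_{\pi}(j)\bigr],
\]
so everything reduces to computing one probability.

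For fixed $i\neq j$ the pair $(\pi(i),\pi(j))$ is a uniformly random ordered pair of distinct elements of $\left[r\right]$, and $\phi_{\pi}(i)\neq\phi_{\pi}(j)$ exactly when these two elements fall in different classes $B_{\ell}$. The number of such ordered pairs is $2t_{k}(r)$, whence $\Pr[\phi_{\pi}(i)\neq\phi_{\pi}(j)]=2t_{k}(r)/(r(r-1))=t_{k}(r)/\binom{r}{2}$, a value independent of $i$ and $j$. Substituting makes the expected retained weight exactly $\frac{t_{k}(r)}{\binom{r}{2}}\sum_{i<j}w_{ij}=\frac{t_{k}(r)}{\binom{r}{2}}m$, so some permutation $\pi$, and hence some colouring $\phi$, does at least this well, which gives the bound. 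The one point that needs care, rather than genuine difficulty, is checking that collapsing parts indeed preserves $k$-partiteness and that the balanced partition is the correct object to randomise against; once the probability $t_{k}(r)/\binom{r}{2}$ is isolated, the Tur\'{a}n count recorded just before the statement closes the argument at once.
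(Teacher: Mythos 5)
Your proof is correct and is essentially the paper's own argument in slightly more explicit clothing: the paper also contracts the $r$ parts to a weighted complete graph $K_{r}$ and averages the retained weight over all maximum $k$-partite subgraphs (copies of the Tur\'{a}n graph), its symmetry claim that every edge lies in the same fraction $t_{k}(r)/\binom{r}{2}$ of them being exactly your computation of $\Pr[\phi_{\pi}(i)\neq\phi_{\pi}(j)]$ under a random permutation. No substantive difference, and no gaps.
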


\begin{proof}
Let $K$ be the weighted complete graph of order $r,$ whose vertices are the
vertex classes of $G,$ and the edge weights are the sums of the weights of all
edges across the corresponding classes. Clearly the total weight of $K$ is
$m.$ Define a random variable $\mathbf{X}_{k}\left(  K\right)  $ equal to the
total weight of a randomly chosen $k$-partite subgraph of $K$ with
$t_{k}\left(  r\right)  $ edges. Let $M$ be the number of all such subgraphs
of $K$. By symmetry, each edge of $K$ belongs to the same number of such
subgraphs, which obviously is
\[
\frac{t_{k}\left(  r\right)  }{\binom{r}{2}}M.
\]
Therefore,
\[
\mathbb{E}\left(  \mathbf{X}_{k}\left(  K\right)  \right)  =\frac{1}{M}%
\cdot\frac{t_{k}\left(  r\right)  }{\binom{r}{2}}Mm=\frac{t_{k}\left(
r\right)  }{\binom{r}{2}}m.
\]
Thus, there is a $k$-partite subgraph of $G$ of total weight at least
$t_{k}\left(  r\right)  m/\binom{r}{2},$ as claimed.
\end{proof}

Note that Theorem \ref{th2} is an improvement over the straightforward lower
bound $\mathrm{mc}_{k}\left(  G\right)  \geq\left(  1-1/k\right)  m.$

We are now ready to describe a class of regular graphs that force equality in
(\ref{mcin}).\medskip

Let $\chi\geq k\geq2$ and suppose that $k$ divides $\chi.$ Take a $t$-regular
graph $H$ of order $n$ satisfying $\omega\left(  H\right)  \geq\chi$ and
\[
\left\vert \mu_{\min}\left(  H\right)  \right\vert <\frac{t}{\chi-1}.
\]

Let $J_{\chi}$ be the $\chi\times\chi$ matrix of all-ones and $I_{\chi}$ be
the identity matrix of order $\chi.$ Write $A\left(  H\right)  $ for \ the
adjacency matrix of $H.$ The Kronecker product $B:=\left(  J_{\chi}-I_{\chi
}\right)  \otimes A\left(  H\right)  $ is a symmetric $\left(  0,1\right)
$-matrix with zero diagonal. Let $G$ be the graph with adjacency matrix $B.$
Clearly, $G$ is $\left(  \chi-1\right)  t$-regular graph of order $\chi n.$
Also,
\[
\mu_{\min}\left(  G\right)  =\min\left\{  -t,\left(  \chi-1\right)  \mu_{\min
}\left(  H\right)  \right\}  =-t.
\]

Using the fact that $\omega\left(  H\right)  \geq\chi,$ one can show that
$\omega\left(  G\right)  =\chi,$ which obviously implies that $\chi\left(
G\right)  =\chi$ as well. Since $k$ divides $\chi$, we have $t_{k}\left(
\chi\right)  =\frac{k-1}{2k}\chi^{2}$ and Theorem \ref{th2} implies that
\begin{align*}
\mathrm{mc}_{k}\left(  G\right)   &  \geq\frac{\frac{k-1}{2k}\chi^{2}}%
{\frac{\chi\left(  \chi-1\right)  }{2}}e\left(  G\right)  =\frac{k-1}{k}%
\frac{\chi}{\chi-1}e\left(  G\right)  =\frac{k-1}{k}e\left(  G\right)
+\frac{k-1}{k}\frac{t\left(  \chi-1\right)  \chi n}{2\left(  \chi-1\right)
}\\
&  =\frac{k-1}{k}\left(  e\left(  G\right)  +\frac{tv\left(  G\right)  }%
{2}\right)  =\frac{k-1}{k}\left(  e\left(  G\right)  -\frac{\mu_{\min}\left(
G\right)  v\left(  G\right)  }{2}\right)  .
\end{align*}
Hence the graph $G$ forces equality in (\ref{mcin}).\bigskip

To conclude, we show that two results of the recent paper\cite{DaSo16} are
simple consequences of a result proved in \cite{Nik07}. In \cite{DaSo16}, van
Dam and Sotirov showed that
\begin{equation}
\mathrm{mc}_{k}\left(  G\right)  \leq\frac{n\left(  k-1\right)  }{2k}%
\lambda\left(  G\right)  ,\label{DS}%
\end{equation}
where $\lambda\left(  G\right)  $ is the maximum eigenvalue of the Laplacian
matrix of $G.$ However, (\ref{DS}) follows immediately from an inequality in
\cite{Nik07} that reads as:\medskip

\emph{If }$H$\emph{\ is a }$k$\emph{-partite graph and }$\mu\left(  H\right)
$\emph{\ is the maximum eigenvalue of its adjacency matrix, then}%
\begin{equation}
\mu\left(  H\right)  \leq\frac{k-1}{k}\lambda\left(  H\right)  . \label{min}%
\end{equation}

Indeed, if $H$ is a $k$-partite subgraph of $G$ with $\mathrm{mc}_{k}\left(
G\right)  $ edges, then
\[
\frac{2\mathrm{mc}_{k}\left(  G\right)  }{n}\leq\mu\left(  H\right)  \leq
\frac{k-1}{k}\lambda\left(  H\right)  \leq\frac{k-1}{k}\lambda\left(
G\right)  ,
\]
and inequality (\ref{DS}) follows. Note that for regular graphs (\ref{DS}) and
(\ref{mcin}) are equivalent, but they are incomparable in general.\medskip

Further, van Dam and Sotirov show that if $G$ has $m$ edges, then its
chromatic number $\chi\left(  G\right)  $ satisfies:%
\begin{equation}
\chi\left(  G\right)  \geq1+\frac{2m}{n\lambda\left(  G\right)  -2m}%
.\label{DS1}%
\end{equation}

However, this inequality is also a simple consequence of (\ref{min}). Indeed,
rewriting (\ref{min}) as
\[
\chi\left(  G\right)  \geq1+\frac{\mu\left(  G\right)  }{\lambda\left(
G\right)  -\mu\left(  G\right)  },
\]
inequality (\ref{DS1}) follows as
\[
\chi\left(  G\right)  \geq1+\frac{\mu\left(  G\right)  }{\lambda\left(
G\right)  -\mu\left(  G\right)  }\geq1+\frac{2m/n}{\lambda\left(  G\right)
-2m/n}=1+\frac{2m}{n\lambda\left(  G\right)  -2m}.
\]

\end{document}